\newtheorem*{maintheorem}{Theorem}
\newtheorem{proposition}{Proposition}
\theoremstyle{remark}
\theoremstyle{definition}
\newcommand{\bR}{\mathbb{R}}
\newcommand{\bZ}{\mathbb{Z}}
\newcommand{\bN}{\mathbb{N}}
\newcommand{\g}{\mathbf{\Gamma}}
\newcommand{\uL}{\mathbf{L}}
\newcommand{\ux}{\mathbf{x}}
\begin{document}

\baselineskip=17pt

\title[Parametric geometry of numbers ]
{Continued fractions\\ and \\ Parametric geometry of numbers}

\author{Aminata Dite Tanti Keita}

\address{
   D\'epartement de Math\'ematiques\\
   Universit\'e d'Ottawa\\
   585 King Edward\\
   Ottawa, Ontario K1N 6N5, Canada}
  \email{akeit104@uottawa.ca}
\subjclass[2010]{Primary 11J04; Secondary 11J82}
\thanks{Work partially supported by NSERC (Canada)}

\begin{abstract}
Recently, W.~M.~Schmidt and L.~Summerer developed a new theory called 
Parametric Geometry of Numbers which approximates the behaviour 
of the successive minima of a family of convex bodies in  $\mathbb{R}^{n}$ 
related to the problem of simultaneous rational 
approximation to given real numbers. In the case of one number, we show 
that the qualitative behaviour of the  minima reflects the
continued fraction expansion of the smallest distance from this number
 to an integer.
\end{abstract}

\maketitle

\section{Introduction}
The parametric geometry of numbers, recently introduced by W.\ M.\ Schmidt and L.\ Summerer,
is a theory which analyzes the behaviour of the successive minima of parametric families of convex bodies in
 $\bR^n$. It provides a new approach to
 the problem of simultaneous approximation to real numbers by rational numbers. It was initially 
developed in dimension $n=3$ in \cite{SS2009}, then
extended to the general case $n\geqslant 2$ in \cite{SS2013a}, and completed in
 \cite{R_preprint}. Our goal is to revisit the case $n=2$ by providing 
an exact description of the qualitative behaviour of the successive minima in that case. 
For this, we consider the family of convex bodies given by
\[
 \mathcal{C}_{\xi}(e^{q}) := \lbrace (x,y) \in \mathbb{R}^{2}\,;\,|x| \leqslant e^{q} \,,\,|x\xi -y| 
 \leqslant e^{-q} \rbrace \qquad ( q\geqslant 0 ), 
\]
for some fixed $\xi\in\bR$. For $j=1,2$ and $q\geqslant0$, let $L_{\xi,j}(q)=
 \log \lambda_{j}(\mathcal{C}_{\xi}(e^{q}))$, where $\lambda_{j}(\mathcal{C}_{\xi}(e^{q}))$ 
denotes the $j$-th minimum of $ \mathcal{C}_{\xi}(e^{q})$, i.e.\ the smallest 
$\lambda \geqslant 0$ such that $\lambda 
\mathcal{C}_{\xi}(e^{q})$ contains at least $j$ linearly independent elements 
in $\mathbb{Z}^{2}$. Further, we define a function $\uL_{\xi}: [0,\infty)\longrightarrow \mathbb{R}$ by
\[\uL_{\xi}(q)= (L_{\xi,1}(q),L_{\xi,1}(q)) \qquad (q\geqslant 0).\]
As noted by Schmidt and Summerer \cite[$\mathsection 4$]{SS2009}, $\uL_{\xi}$
enjoys the following properties.
\begin{itemize}
 \item[\huge .]
 Each component $L_{\xi,j}: [0,\infty)\rightarrow \mathbb{R}$ is continuous 
and piecewise linear with slope $\pm1$.
\item[\huge .]
If $q\in [0,\infty)$ is such that $L_{\xi,1}$ admits a local maximum at $q$, then we have \[L_{\xi,1}(q)=~L_{\xi,2}(q).\]
\item[\huge .]
For all $q\geqslant0$, we have -$\log2\leqslant L_{\xi,1}(q)+L_{\xi,2}(q) \leqslant 0$ 
(by virtue of Minkowski's theorem.)
\end{itemize}
The union of the graphs of $L_{\xi,1}$ and $L_{\xi,2}$ in $[0,\infty)\times \mathbb{R}$ 
is called the \emph{combined graph} of the function $\uL_{\xi}$, and is denoted $\g_{\xi}$. 
Our objective is to show that the simple continued fraction expansion of a number 
$\xi \in \mathbb{R}$ can be read from this graph when $\xi \in [0,\frac{1}{2}$\normalsize$]$

\begin{maintheorem}
\label{imp1}
Let $\xi \in \mathbb{R}$. Then, $\g_{\xi}= \g_{\Vert \xi \Vert}$ depends only 
on the distance from $\xi$ to the closest integer, denoted $\Vert \xi \Vert$.
 Moreover, let $(q_{n})_{0\leqslant n < s}$ with
$s \in \mathbb{N}^{\star}\cup \lbrace \infty \rbrace$ denote the increasing 
sequence of points in $[0,\infty)$ for which $L_{\xi,1}$ admits a local maximum,
 and let $a_n$ denote for each positive integer $n<s$ the number of local maxima 
of $L_{\xi,2}$ restricted to the interval $[q_{n-1},q_{n}]$. Then it follows that $q_{0}=0$,
 and that the simple continued fraction expansion of $\Vert \xi \Vert$ is given by
 \begin{align}
\label{dv}
\Vert \xi \Vert=\begin{cases} [0] & \text{if} \quad s=1 ,\\
[0,a_{1}, a_{2},\ldots,a_{s-1}] & \text{if} \quad 2\leqslant s < \infty , \\
[0,a_{1},a_{2},\ldots] & \text{if} \quad s=\infty,
\end{cases}
\end{align}
with $a_{s-1}\geqslant 2$ if $2\leqslant s < \infty$.
\end{maintheorem}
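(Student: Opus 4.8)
The plan is to reduce everything to an explicit computation of the combined graph $\g_\xi$, organized around the two lattice points that produce the successive minima as $q$ grows. First I would establish the reduction $\g_\xi = \g_{\Vert\xi\Vert}$: the map $(x,y)\mapsto(x, y - kx)$ for $k\in\bZ$ is a unimodular transformation of $\bZ^2$ carrying $\cC_\xi(e^q)$ to $\cC_{\xi-k}(e^q)$, and $(x,y)\mapsto(-x,y)$ handles the sign, so the minima, hence $\uL_\xi$, depend only on $\Vert\xi\Vert$. Thus I may assume $0\le\xi\le\tfrac12$. If $\xi = 0$ the body is $\cC_0(e^q) = \{|x|\le e^q,\ |y|\le e^{-q}\}$, whose minima are realized by $(1,0)$ and $(0,1)$; one checks $L_{\xi,1}(q) = \max(-q, q-\log?)$… more simply, for $\xi=0$ the first minimum comes from $(0,1)$ on $[0,\infty)$ with $L_{\xi,1}(q)=-q$ strictly decreasing, so $L_{\xi,1}$ has no local maximum, $s=1$, and $\Vert\xi\Vert = 0 = [0]$, matching the first case. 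So from now on $0 < \xi \le \tfrac12$.

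The heart of the argument is to track which nonzero integer points $(p,a)\in\bZ^2$ can be a best approximation, i.e.\ can realize $\lambda_1$ or $\lambda_2$ at some $q$. For a point $v=(p,a)$ put $R(q,v) = \max(|p|e^{-q}, |a-p\xi|e^{q})$, so that $v\in\lambda\cC_\xi(e^q)$ iff $\lambda \ge R(q,v)$; each $q\mapsto \log R(q,v)$ is a V-shaped (or monotone) piecewise-linear function with slopes $\pm1$ whose minimum value is $\log\sqrt{|p|\,|a-p\xi|}$, attained at $q_v = \tfrac12\log(|a-p\xi|^{-1}|p|)$. The function $L_{\xi,1}(q)$ is the lower envelope of $\log R(q,v)$ over all $v\ne0$, and $L_{\xi,2}(q)$ is the "second lowest" envelope over $v$ linearly independent from the minimizing one. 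The key classical fact I would invoke or reprove is that the only $v$ relevant to $L_{\xi,1}$ are (essentially) $(0,1)$ together with the convergents $(q_k, p_k)$ of $\xi = [0,a_1,a_2,\dots]$, because for these $|a-p\xi| = |q_k\xi - p_k|$ is smallest among all $p\le q_{k+1}$; and the points relevant to $L_{\xi,2}$ at a given moment are the previous convergent and the intermediate fractions $(jq_{k-1}+q_{k-2}, jp_{k-1}+p_{k-2})$ for $1\le j\le a_k$. This is exactly the three-distance / best-approximation structure of continued fractions, and recasting it in the $\log R$ language is where I expect the main work and the main obstacle to lie — one must carefully verify that no other lattice point ever dips below these, and handle the transition points where the minimizing pair switches.

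Granting that, the counting becomes combinatorial bookkeeping. The local maxima of $L_{\xi,1}$ occur precisely at the $q$ where the point realizing $\lambda_1$ changes from one convergent $v_{k-1}=(q_{k-1},p_{k-1})$ to the next $v_k$; at such a $q$ the two graphs $\log R(\cdot,v_{k-1})$ and $\log R(\cdot,v_k)$ cross at the top of $L_{\xi,1}$'s local max, forcing $L_{\xi,1}(q)=L_{\xi,2}(q)$ there (consistent with the second bulleted property), and these crossing abscissae form the increasing sequence $(q_n)_{0\le n<s}$, with $q_0=0$ since $\xi\le\tfrac12$ makes $(1,0)$ give $\lambda_1$ a local max at the origin (its $\log R$ being $\max(-q, \log\xi + q)$, minimized at $q_0 = -\tfrac12\log\xi \ge 0$ — I would check the boundary behavior so that $q_0=0$ exactly, i.e.\ the local max at the left endpoint). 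Then $s-1$ equals the number of convergents, i.e.\ the length of the continued fraction. On the interval $[q_{n-1},q_n]$, $\lambda_1$ is realized by $v_{n-1}$ while $\lambda_2$ is realized successively by the $a_n$ intermediate fractions between $v_{n-1}$ and the earlier convergent, each contributing one local maximum of $L_{\xi,2}$; hence the count $a_n$ of such local maxima equals the partial quotient. The normalization $a_{s-1}\ge 2$ when $s<\infty$ mirrors the standard convention that the last partial quotient of a finite continued fraction is $\ge 2$: a rational $\Vert\xi\Vert$ has two continued-fraction representations and the geometry selects the one ending in $\ge 2$ because the final convergent's graph must actually overtake at an interior-type configuration. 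Assembling these identities yields \eqref{dv}, and the three cases ($s=1$, finite $s$, $s=\infty$) correspond to $\xi$ being $0$, rational nonzero, or irrational.
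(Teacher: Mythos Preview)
Your approach is essentially the paper's: reduce to $\xi\in[0,\tfrac12]$ via a unimodular change of basis, cover $\g_\xi$ by the V-shaped trajectories $L_{\ux}(q)=\max\{\log|Q|-q,\ \log|Q\xi-P|+q\}$ of the convergents $\ux_n=(Q_n,P_n)$ (which determine $L_{\xi,1}$) and the semiconvergents $\ux_{n,t}$ (which determine $L_{\xi,2}$ on $[q_{n-1},q_n]$), and then read off $a_n$ as the number of local maxima of $L_{\xi,2}$ there; the step you correctly flag as the main obstacle---showing no other primitive lattice point dips below these trajectories---is exactly what the paper isolates as its Propositions~\ref{pr1} and~\ref{pr2}. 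Two small slips to clean up: for $\xi=0$ the first minimum is realized by $(1,0)$ (not $(0,1)$), and $q_0=0$ is always counted as a (boundary) local maximum of $L_{\xi,1}$, so your ``no local maximum, $s=1$'' should read ``exactly one local maximum at $q_0=0$, hence $s=1$.''
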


In particular, the numbers $a_{n}$ are the \emph{partial quotients} of $\Vert \xi \Vert$. 
Figure 1 illustrates this result by showing the combined graph $\g_{\xi}$ (in solid lines) 
for several rational numbers $\xi$.

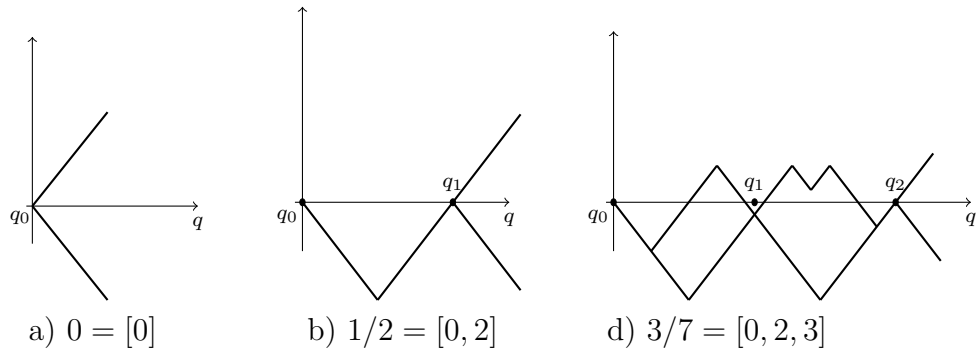
\begin{figure}[!h]
 \label{fig1}
 \begin{tabular}{lllllll}
\begin{tikzpicture}[xscale=0.4,yscale=0.5]
\draw[->] (-0.7,0)--(-0.5,0)  -- (5,0)node[below]{\tiny $q$};
\draw[->]  (-0.5,-1) --(-0.5,0)-- (-0.5,4.5);
\node at (-0.9,-0.3) {\tiny $q_{0}$};
\draw[thick,domain=-0.5:2] plot (\x, {-0.5-\x});
\draw[thick,domain=-0.5:2] plot (\x, {0.5+\x});

\end{tikzpicture}
&
\qquad
&
\begin{tikzpicture}[xscale=0.5,yscale=0.65]
\draw[->] (-0.7,0)--(-0.5,0)  -- (5,0)node[below]{\tiny $q$};
\draw[->]  (-0.5,-1) --(-0.5,0)-- (-0.5,4);
\node at (-0.9,-0.3) {\tiny $q_{0}$};

\draw[thick,domain=-0.5:1.5] plot (\x, {-0.5-\x});
\draw[  thick, domain=1.5:5.3] plot (\x, {-3.5+\x});
\draw[fill] (-0.5,0) circle [radius=2pt];

\draw[  thick,domain=3.5:5.3] plot (\x, {3.5-\x});

\draw[fill] (3.5,0) circle [radius=2pt]node[above]{\tiny $ q_{1}$ };

\end{tikzpicture}

&
\qquad
&

\begin{tikzpicture}[xscale=0.5,yscale=0.65]
\draw[->] (-0.7,0)--(-0.5,0)  -- (9,0)node[below]{\tiny $q$};
\draw[->]  (-0.5,-1) --(-0.5,0)-- (-0.5,3.5);
\node at (-0.9,-0.3) {\tiny $q_{0}$};

\draw[thick,domain=-0.5:1.5] plot (\x, {-0.5-\x});
\draw[  thick, domain=1.5:4.25] plot (\x, {-3.5+\x});
\draw[fill] (-0.5,0) circle [radius=2pt];

\draw[  thick,domain=3.25:5] plot (\x, {3-\x});
\draw[fill] (3.25,0) circle [radius=2pt]node[above]{\tiny $ q_{1}$ };

\draw[  thick, domain=0.5:2.25] plot (\x, {-1.5+\x});
\draw[thick,domain=2.25:3.25] plot (\x, {3-\x});

\draw[thick,domain=5:8] plot (\x, {-7+\x});
\draw[thick,domain=7:8.2] plot (\x, {7-\x});
\draw[fill] (7,0) circle [radius=2pt]node[above]{\tiny $ q_{2}$ };

\draw[thick, domain=4.25:4.75] plot (\x, {5-\x});
\draw[thick,domain=4.75:5.25] plot (\x, {-4.5+\x});
\draw[thick, domain=5.25:6.5] plot (\x, {6-\x});
\end{tikzpicture}

& 
\qquad \qquad

& 
\qquad \qquad
\end{tabular}

\begin{tabular}{llllllll}
 \,\, a) $0=[0]$\qquad \qquad \quad & b) $1/2=[0,2]$ \qquad \quad  &\,\,d) $3/7=[0,2,3]$ 
 & \qquad \qquad  &\qquad\qquad  &\qquad\qquad
  \end{tabular}
       
       \caption{ Examples of combined graphs.}
     
 \end{figure}
For the proof, fix an arbitrary $\xi \in \bR$, and choose $m \in \mathbb{Z}$ 
and $\epsilon=\pm 1$ such that $\xi = m + \epsilon \Vert \xi \Vert  $. The map
\begin{align*}
\phi : \quad \mathbb{R}^{2}&\longrightarrow \mathbb{R}^{2}\\
  \quad(x,y) &\longmapsto (x, \epsilon(y- m x))
\end{align*}
is an $\bR$-vector space isomorphism which satisfies $\phi(\mathbb{Z}^{2}) = \mathbb{Z}^{2}$.
 Thus, for each  $q\geqslant 0$, we have that
\[\lambda_{j}(\mathcal{C}_{\xi}(e^{q})) = \lambda_{j}(\phi(\mathcal{C}_{\xi}(e^{q}))
=\lambda_{j}(\mathcal{C}_{\Vert \xi \Vert}(e^{q}))  \qquad  (j=1, 2),\]
from which it follows that $\uL_{\xi}(q)= \uL_{\Vert \xi \Vert}(q)$. 
This proves the first statement of the theorem. 

For what follows, we shall suppose 
that $\xi=\Vert \xi \Vert \in [0,\frac{1}{2}$\normalsize$]$. The simple continued 
fraction expansion of $\xi \in [0,\frac{1}{2}$\normalsize$]$ is given by (\ref{dv}) 
for some sequence $(a_{n})_{1\leqslant n<s}$ in $\mathbb{N}^{\star}$, with  
$a_{s-1}\geqslant 2$ if $2\leqslant s < \infty$. The $n$-th  convergent of $\xi$ is the rational number
\[\cfrac{P_{n}}{Q_{n}}= [0, a_{1},\ldots, a_{n}] \qquad (1\leqslant n<s),\]
where $P_{n}$ and $Q_{n}$ are the positive integers given by the recurrence formula
\begin{align*}
Q_{n}&= Q_{n-2}+a_{n}Q_{n-1}\quad\text{and}\quad P_{n}= P_{n-2} +a_{n}P_{n-1} \qquad (1\leqslant n<s),
\end{align*}
with initial values $P_{-1}=Q_{0}=1$ and $P_{0}=Q_{-1}=0$. Let $Q_{s}=\infty$ if $s<\infty$.
 The proof of the second part of the theorem uses the following facts (see \cite[chap I]{Sc}).
\begin{enumerate}[label=(\roman*)]
 \item 
 The sequence $(Q_{n})_{0\leqslant n < s}$ is a strictly increasing sequence of positive integers.
 \item 
 The sequence $(Q_{n}\xi -P_{n})_{-1\leqslant n < s}$ consists of real numbers of alternating 
signs (except the last term which is zero if $s<\infty$), whose absolute values are strictly decreasing.
 \item 
We have $Q_{n}P_{n-1}-Q_{n-1}P_{n}=(-1)^{n}$  for each $n\in\bN$ satisfying $0\leqslant n < s$.
 \end{enumerate}
 
Consider the integers
\begin{align}
\label{ri}
Q_{n,t}&= Q_{n-2}+tQ_{n-1}\quad \text{and} \quad P_{n,t}= P_{n-2}+ tP_{n-1} 
\quad (0\leqslant t\leqslant a_{n} \, ,\, 1\leqslant n < s).
\end{align}
The fractions $P_{n,t}/Q_{n,t}$ with $0< t < a_{n}$, when they exist, are called the 
\emph{semiconvergents} of $\xi$ between $P_{n-1}/Q_{n-1}$ and $P_{n}/Q_{n}$. 
The following proposition relates the points $\ux_{n,t}=(Q_{n,t},P_{n,t})$ to the 
points $\ux_{n}=(Q_{n},P_{n})$ with  $-1\leqslant n<s$, as well as to the quantities 
$\Delta_{n}$ and $\Delta_{n,t}$ defined by
\[\Delta_{n}=|Q_{n}\xi -P_{n}| \quad (-1\leqslant n < s)\,\, \text{and} \,\,
\Delta_{n,t}=|Q_{n,t}\xi -P_{n,t}| \quad (0\leqslant t\leqslant a_{n} \, ,\, 1\leqslant n < s).\]

\begin{proposition}
\label{pr1}
Let $n$ be a positive integer with $n<s$. Then we have
\begin{align}
\label{eg1}
Q_{n,0}&=Q_{n-2} < Q_{n-1} \leqslant Q_{n,1}<\cdots\, < Q_{n,a_{n}}=Q_{n} ,\\
\label{eg2}
\Delta_{n,a_{n}}&=\Delta_{n}< \Delta_{n-1} \leqslant \Delta_{n,a_{n}-1}<\cdots < \Delta_{n,0}=\Delta_{n-2},
\end{align}
with $ Q_{n,1}= Q_{n-1}$ iff $n=1$, and $\Delta_{n-1} =\Delta_{n,a_{n}-1}$ iff
 $s<\infty$ and $n=s-1$. Moreover, the points $\ux_{n,t}=(Q_{n,t},P_{n,t})$ with
 $0\leqslant t\leqslant a_{n}$ are precisely the pairs of non-negative integers $(Q,P)$ satisfying
\begin{equation}
\label{eq1}
Q_{n-2}\leqslant Q\leqslant Q_{n},\quad |Q\xi -P| \leqslant \Delta_{n-2}=|Q_{n-2}\xi -P_{n-2}|,\quad QP_{n-1}-Q_{n-1}P\neq 0.
\end{equation}
Finally, if $s<\infty$, then there exists no pair $(Q,P) \in \bZ^{2}\backslash\{0\}$ satisfying
\begin{equation}
\label{eq2}
|Q\xi -P| < \Delta_{s-2}=|Q_{s-2}\xi -P_{s-2}|, \quad QP_{s-1}-Q_{s-1}P\neq 0.
\end{equation}

\end{proposition}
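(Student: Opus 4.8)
The plan is to treat \eqref{eg1}, \eqref{eg2}, the characterization in \eqref{eq1}, and finally \eqref{eq2}; the reverse inclusion in \eqref{eq1} is where the real work lies.

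The chain \eqref{eg1} is immediate from \eqref{ri}: the affine map $t\mapsto Q_{n,t}=Q_{n-2}+tQ_{n-1}$ has positive slope $Q_{n-1}\geqslant1$, hence is strictly increasing, and takes the values $Q_{n-2}$ at $t=0$ and $Q_{n}$ at $t=a_{n}$ by the recurrence for $Q_{n}$; moreover $Q_{n,1}=Q_{n-2}+Q_{n-1}\geqslant Q_{n-1}$ with equality exactly when $Q_{n-2}=0$, i.e.\ $n=1$, while $Q_{n,0}=Q_{n-2}<Q_{n-1}$ is fact~(i). For \eqref{eg2}, write $\delta_{m}=Q_{m}\xi-P_{m}$. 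From \eqref{ri} one gets $Q_{n,t}\xi-P_{n,t}=\delta_{n-2}+t\delta_{n-1}$, and from $\ux_{n}=\ux_{n-2}+a_{n}\ux_{n-1}$ one gets $\delta_{n}=\delta_{n-2}+a_{n}\delta_{n-1}$. By fact~(ii), $\delta_{n-2}$ and $\delta_{n-1}$ have opposite signs while $\delta_{n-2}$ and $\delta_{n}$ have the same sign, so the affine function $t\mapsto\delta_{n-2}+t\delta_{n-1}$ keeps the sign of $\delta_{n-2}$ on all of $[0,a_{n}]$; hence
\[
\Delta_{n,t}=|\delta_{n-2}+t\delta_{n-1}|=\Delta_{n-2}-t\Delta_{n-1}=\Delta_{n}+(a_{n}-t)\Delta_{n-1}\qquad(0\leqslant t\leqslant a_{n}).
\]
Because $\Delta_{n-1}>0$ (fact~(ii), since $n<s$), this is strictly decreasing in $t$, equals $\Delta_{n-2}$ at $t=0$ and $\Delta_{n}$ at $t=a_{n}$, and $\Delta_{n,a_{n}-1}=\Delta_{n}+\Delta_{n-1}\geqslant\Delta_{n-1}$ with equality iff $\Delta_{n}=0$, that is, iff $s<\infty$ and $n=s-1$; this proves \eqref{eg2}. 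The first equality in the display also gives $\Delta_{n-2}=\Delta_{n}+a_{n}\Delta_{n-1}\geqslant a_{n}\Delta_{n-1}$, which I use below.

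For \eqref{eq1}, one inclusion is routine: each $\ux_{n,t}$ with $0\leqslant t\leqslant a_{n}$ satisfies \eqref{eq1} because its coordinates are non-negative, $Q_{n-2}=Q_{n,0}\leqslant Q_{n,t}\leqslant Q_{n,a_{n}}=Q_{n}$ and $\Delta_{n,t}\leqslant\Delta_{n,0}=\Delta_{n-2}$ by \eqref{eg1} and \eqref{eg2}, and $Q_{n,t}P_{n-1}-Q_{n-1}P_{n,t}=Q_{n-2}P_{n-1}-Q_{n-1}P_{n-2}=(-1)^{n}\neq0$ by fact~(iii). For the reverse inclusion, let $(Q,P)$ be a pair of non-negative integers satisfying \eqref{eq1}. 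Fact~(iii) gives $Q_{n-1}P_{n-2}-Q_{n-2}P_{n-1}=(-1)^{n-1}$, so $\ux_{n-2},\ux_{n-1}$ is a $\bZ$-basis of $\bZ^{2}$ and $(Q,P)=u\,\ux_{n-2}+v\,\ux_{n-1}$ for unique $u,v\in\bZ$; a direct computation then gives $QP_{n-1}-Q_{n-1}P=u(-1)^{n}$, so the third condition in \eqref{eq1} reads exactly $u\neq0$. The crux is to deduce $u=1$: writing $Q=uQ_{n-2}+vQ_{n-1}$ and $Q\xi-P=u\delta_{n-2}+v\delta_{n-1}$, one uses the non-negativity of $Q$ and $P$ together with $|u\delta_{n-2}+v\delta_{n-1}|=|Q\xi-P|\leqslant\Delta_{n-2}=|\delta_{n-2}|$ to exclude $u\leqslant-1$; and, since $\delta_{n-2},\delta_{n-1}$ have opposite signs and $\Delta_{n-2}\geqslant a_{n}\Delta_{n-1}$, that same bound forces $v\geqslant(u-1)a_{n}$, so for $u\geqslant2$ and $n\geqslant2$ one gets $Q\geqslant2Q_{n-2}+a_{n}Q_{n-1}=Q_{n}+Q_{n-2}>Q_{n}$, contradicting \eqref{eq1}. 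Once $u=1$ we have $(Q,P)=\ux_{n,v}$ and $Q_{n-2}\leqslant Q_{n-2}+vQ_{n-1}\leqslant Q_{n}=Q_{n-2}+a_{n}Q_{n-1}$ forces $0\leqslant v\leqslant a_{n}$. I expect this reverse inclusion to be the main obstacle, with the exclusion of $u\geqslant2$ needing a short separate check at the extreme index $n=1$, where $Q_{n-2}=0$.

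Lastly, \eqref{eq2}. If $s<\infty$ then $\xi=P_{s-1}/Q_{s-1}$, so $\delta_{s-1}=0$. For any $(Q,P)\in\bZ^{2}\setminus\{0\}$ write $(Q,P)=u\,\ux_{s-2}+v\,\ux_{s-1}$ in the $\bZ$-basis $\ux_{s-2},\ux_{s-1}$; then $QP_{s-1}-Q_{s-1}P=u(-1)^{s}$ and $Q\xi-P=u\delta_{s-2}+v\delta_{s-1}=u\delta_{s-2}$. Hence $QP_{s-1}-Q_{s-1}P\neq0$ implies $u\neq0$, so $|Q\xi-P|=|u|\,\Delta_{s-2}\geqslant\Delta_{s-2}$, which contradicts $|Q\xi-P|<\Delta_{s-2}$. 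Therefore no such pair exists.
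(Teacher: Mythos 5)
Your route is essentially the paper's: everything is read off from the expansion $\ux=u\ux_{n-2}+v\ux_{n-1}$ in the $\bZ$-basis furnished by fact (iii), together with the sign alternation of fact (ii). The chains \eqref{eg1}--\eqref{eg2} with their equality cases, the forward inclusion in \eqref{eq1}, the exclusion of $u\leqslant-1$ (the bound forces $v\leqslant 0$, and then $Q,P\geqslant 0$ forces $\ux=0$), the exclusion of $u\geqslant 2$ when $n\geqslant 2$, and the argument for \eqref{eq2} are all correct.

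The deferred check at $n=1$ is, however, a genuine gap, and it cannot be closed in full generality because the characterization in \eqref{eq1} actually fails there when $s=2$. For $n=1$ your inequality $v\geqslant (u-1)a_{1}$ only yields $Q=v\geqslant a_{1}=Q_{1}$, with no slack since $Q_{-1}=0$. If $s\geqslant 3$ you are saved: then $\Delta_{1}>0$, so $\Delta_{-1}=\Delta_{1}+a_{1}\Delta_{0}>a_{1}\Delta_{0}$ strictly, which upgrades the bound to $v\geqslant (u-1)a_{1}+1$ and restores $Q>Q_{1}$. But if $s=2$, i.e.\ $\xi=1/a_{1}$, the point $\ux=2\ux_{-1}+a_{1}\ux_{0}=(a_{1},2)$ satisfies all three conditions of \eqref{eq1} for $n=1$ (indeed $Q=a_{1}=Q_{1}$, $|Q\xi-P|=|1-2|=1=\Delta_{-1}$, and $QP_{0}-Q_{0}P=-2\neq 0$), yet it is not one of the points $\ux_{1,t}=(t,1)$; concretely, $\xi=1/2$ and $(Q,P)=(2,2)$. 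So the ``short separate check'' you postpone would have to exclude this case rather than prove it. The paper's own proof glosses over the same point: it asserts that $r>1$ forces $t>a_{n}$, which fails for $r=2$, $t=a_{1}$ when $n=1$ and $\Delta_{1}=0$. The slip is harmless downstream, since Proposition \ref{pr2} only uses the implication that a point with $|Q\xi-P|$ \emph{strictly} less than $\Delta_{n-2}$ and $QP_{n-1}-Q_{n-1}P\neq 0$ must be among the $\ux_{n,t}$, and the exceptional point attains $|Q\xi-P|=\Delta_{-1}$ exactly; but as a proof of the Proposition as stated, both your argument and the paper's need this boundary case flagged.
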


Due to the lack of a convenient reference, we give a succinct proof.

\begin{proof}
Since $Q_{n-2}\xi -P_{n-2}$ and $Q_{n-1}\xi -P_{n-1}$ are non-zero, of opposing signs, and since 
\[Q_{n}\xi -P_{n}=(Q_{n-2}\xi -P_{n-2})+a_{n}(Q_{n-1}\xi -P_{n-1})\] is either zero 
or of the same sign as $Q_{n-2}\xi -P_{n-2}$, it follows that 
\begin{align*}
\Delta_{n,t}=\Delta_{n-2}-t\Delta_{n-1} \quad (0\leqslant t\leqslant a_{n}),
\end{align*}
and that $\Delta_{n,a_{n}-1}=\Delta_{n,a_{n}}+\Delta_{n-1} = \Delta_{n}+\Delta_{n-1}$. 
The first assertion of the Proposition then follows from properties (i) and (ii).

Let $\ux=(Q,P)$ be a non-zero integer point satisfying (\ref{eq1}). Since  
$\ux_{n-2}=(Q_{n-2},P_{n-2})$ and $\ux_{n-1}=~(Q_{n-1},P_{n-1}) $ form a basis of 
$\mathbb{Z}^{2}$ (by virtue of (iii)), we may write
\[\ux=r\ux_{n-2}+t\ux_{n-1}.\]
with $(r,t) \in \mathbb{Z}^{2}\backslash \lbrace 0\rbrace $. The third condition of 
(\ref{eq1}) yields $r\neq 0$. Thus, the equality
\[ Q\xi -P=r(Q_{n-2}\xi -P_{n-2})+t(Q_{n-1}\xi -P_{n-1})\]
combined with property (ii) and the second condition of (\ref{eq1}) implies that
 $rt\geqslant 0$ and that $t>a_{n}$ if $r>1$. Since $Q= rQ_{n-2}+ tQ_{n-1}$, 
then (i) allows us to conclude that $r=1$ and that $0\leqslant t\leqslant a_{n}$.

Finally, if $s< \infty$ and $\ux=(Q,P)$ satisfies (\ref{eq2}), 
we may write $\ux=a\ux_{s-2}+b\ux_{s-1}$ with $a,b \in \bZ$ and $a\neq 0$. Since 
$\Delta_{s-1}=0$, we deduce that $|Q\xi -P|=a\Delta_{s-2}\geqslant \Delta_{s-2} $, 
contrary to the hypothesis.
\end{proof}

For each $\ux=(Q,P)\in\mathbb{Z}^{2}$ and each $q\geqslant 0$, we define $L_{\ux}(q)$ to be the logarithm 
of the smallest real number $\lambda \geqslant 0 $ such that $\ux \in \lambda  \mathcal{C}_{\xi}(e^{q})$. 
A simple computation gives
\[L_{\ux}(q)=\max \lbrace \log|Q|-q\, ,\, \log|Q\xi -P|+q\rbrace \qquad ( q\geqslant 0). \]
The graph of the function $L_{\ux}: [0,\infty) \rightarrow \mathbb{R}$ is called the
 \emph{trajectory} of the point $\ux$. The union of these graphs for $\ux\in \bZ^{2}\backslash \lbrace 0\rbrace$ 
primitive (i.e. with coprime coordinates) contains $\g_{\xi}$ (see \cite[$\mathsection 4$]{SS2009} or 
\cite[$\mathsection 2$]{R_preprint}).  
In fact the following smaller set of points suffices.

 \begin{proposition}
\label{pr2}
The graph $\g_{\xi}$ is covered by the trajectories of the points $\ux_{n}$ 
with $-1\leqslant n < s$ and those of the points $\ux_{n,t}$ with $1\leqslant n < s$ and $0< t< a_{n}$.
\end{proposition}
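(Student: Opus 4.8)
The plan is to show that at every $q\geqslant 0$ the first minimum $L_{\xi,1}(q)$ — and, at every local maximum of $L_{\xi,1}$, also the second minimum — is realized by one of the listed points, and then to invoke the general fact (cited from \cite{SS2009,R_preprint}) that $\g_\xi$ is the lower envelope of the trajectories of primitive points together with the observation that a combined graph is determined by the trajectories that contribute to it. Concretely, recall that $L_{\xi,1}(q)=\min_{\ux}L_{\ux}(q)$ over nonzero primitive $\ux\in\bZ^2$, and $L_{\xi,2}(q)$ is the next-smallest value over $\ux$ linearly independent from a minimizing vector. So it suffices to prove: for each $q\geqslant 0$ there is a point $\ux$ in our list with $L_{\ux}(q)=L_{\xi,1}(q)$, and whenever $L_{\xi,1}$ has a local maximum at $q$ there are two linearly independent points in the list realizing $L_{\xi,1}(q)=L_{\xi,2}(q)$.

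First I would fix $q\geqslant0$ and let $\ux=(Q,P)$, which we may take primitive with $Q\geqslant 0$ (replacing $\ux$ by $-\ux$), be a point realizing $L_{\xi,1}(q)$; we may also assume $\ux\neq\ux_n$ for all $n$ and $\ux$ is not of the form $\ux_{n,t}$, and derive a contradiction by producing a point in the list whose trajectory lies no higher at $q$. The key input is Proposition~\ref{pr1}: the points $\ux_{n,t}$ (for $0\leqslant t\leqslant a_n$, over all $n$) are exactly the nonnegative integer points with $QP_{n-1}-Q_{n-1}P\neq0$ in the box $Q_{n-2}\leqslant Q\leqslant Q_n$, $|Q\xi-P|\leqslant\Delta_{n-2}$, and moreover $\Delta_{n,t}=\Delta_{n-2}-t\Delta_{n-1}$ with the $\Delta$'s strictly decreasing along the displayed chain. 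Given the minimizing $\ux$, locate the unique $n$ with $Q_{n-1}\leqslant Q<Q_n$ (when $s<\infty$ and $Q\geqslant Q_{s-1}$ use the last clause of Proposition~\ref{pr1}). Then I would argue that $|Q\xi-P|\geqslant\Delta_{n-1}$: indeed if $|Q\xi-P|<\Delta_{n-1}\leqslant\Delta_{n-2}$ then by the characterization \eqref{eq1} either $QP_{n-1}-Q_{n-1}P=0$, forcing $\ux=\pm\ux_{n-1}$ since $\ux$ is primitive, or $\ux=\ux_{n,t}$ for some $t$, both excluded. Hence $L_{\ux}(q)\geqslant\max\{\log Q-q,\ \log\Delta_{n-1}+q\}$, and since $Q\geqslant Q_{n-1}$ this is $\geqslant\max\{\log Q_{n-1}-q,\ \log\Delta_{n-1}+q\}=L_{\ux_{n-1}}(q)$. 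Thus $\ux_{n-1}$, which is in the list, does at least as well, contradicting minimality of $\ux$ outside the list.

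For the second-minimum statement at a local maximum $q^\ast$ of $L_{\xi,1}$: there $L_{\ux}(q^\ast)=L_{\xi,1}(q^\ast)$ forces the trajectory of the minimizing $\ux$ to change slope from $-1$ to $+1$, i.e.\ $\log Q-q^\ast=\log|Q\xi-P|+q^\ast$; I would use the first part together with the alternating-sign/monotonicity facts (i)--(ii) to pin $\ux$ down as one of the $\ux_n$ (a local maximum of the lower envelope of the $\ux_n$-trajectories happens precisely at the crossing times, and the semiconvergent trajectories only touch $\g_\xi$ along slope $+1$ or $-1$ segments), and then exhibit the adjacent point $\ux_{n+1}$ or a suitable $\ux_{n,t}$ which is linearly independent from $\ux$ and whose trajectory passes through the same height at $q^\ast$, using $QP_{n-1}-Q_{n-1}P=(-1)^n\neq0$ from (iii) for independence. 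Finally I would record that once every point actually touching $\g_\xi$ is in the list, $\g_\xi$ is literally the union (restricted to where it equals the envelope) of the corresponding trajectories, which is the assertion.

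The main obstacle I expect is the second paragraph's case analysis when $s<\infty$ and $Q$ is large, together with making rigorous the claim in the last paragraph that \emph{touching} the envelope — not merely lying above it — is what is needed: one must check that no primitive point outside the list can be tangent to $\g_\xi$, and in particular handle the boundary behaviour near $q=0$ (where $\ux_{-1}=(0,1)$ and $\ux_0=(1,0)$ enter) and, in the $s<\infty$ case, the eventual segment where $L_{\xi,1}$ has constant slope $+1$ carried by $\ux_{s-1}=(Q_{s-1},P_{s-1})$ with $Q_{s-1}\xi=P_{s-1}$. Everything else reduces to the inequalities already packaged in Proposition~\ref{pr1} and the formula $L_{\ux}(q)=\max\{\log|Q|-q,\ \log|Q\xi-P|+q\}$.
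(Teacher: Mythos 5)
Your overall strategy is the right one --- reduce everything to the characterization \eqref{eq1} of Proposition~\ref{pr1} and show that a primitive point outside the list is dominated by listed points --- but as written there is a genuine gap in scope. The combined graph $\g_{\xi}$ is the union of the graphs of \emph{both} $L_{\xi,1}$ and $L_{\xi,2}$ over all $q\geqslant 0$, whereas your plan only establishes that listed points realize $L_{\xi,1}(q)$ for every $q$ and realize $L_{\xi,2}(q)$ at the local maxima of $L_{\xi,1}$. That leaves uncovered exactly the part of $\g_{\xi}$ that carries the content of the main theorem: on the open intervals $(q_{n-1},q_n)$ the second minimum is traced out by the semiconvergent trajectories $L_{\ux_{n,t}}$, and you must rule out that some other primitive point dips below them there. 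Your third paragraph does not do this, and the domination you actually prove is too weak for the purpose: from $|Q\xi-P|\geqslant\Delta_{n-1}$ you only get $L_{\ux}(q)\geqslant L_{\ux_{n-1}}(q)$, i.e.\ domination by a \emph{single} listed point, and a point dominated by one trajectory can still realize the second minimum (it could be the best among points linearly independent from $\ux_{n-1}$).

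The repair is short and is essentially the paper's proof. For a primitive $\ux=(Q,P)$ with $Q\geqslant 1$, $\ux\neq\pm\ux_m$, $\ux\neq\pm\ux_{m,t}$, and $Q_{n-1}\leqslant Q<Q_{n}$, the very same appeal to \eqref{eq1} (and to \eqref{eq2} when $s<\infty$ and $n=s$) gives the stronger conclusion $|Q\xi-P|\geqslant\Delta_{n-2}$: if $|Q\xi-P|\leqslant\Delta_{n-2}$ then $(Q,P)$ lies in the box of \eqref{eq1}, so it is either a multiple of $\ux_{n-1}$ or one of the $\ux_{n,t}$, both excluded. Hence
\[
L_{\ux}(q)\ \geqslant\ \max\bigl\lbrace \log Q_{n-1}-q,\ \log\Delta_{n-2}+q\bigr\rbrace
\ =\ \max\bigl\lbrace L_{\ux_{n-1}}(q),\,L_{\ux_{n-2}}(q)\bigr\rbrace
\]
for all $q\geqslant 0$, i.e.\ $\ux$ is dominated everywhere by \emph{two linearly independent} listed points. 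This single inequality removes $\ux$ from contention for the first \emph{and} the second minimum at every $q$ simultaneously (in any pair of independent points computing $\lambda_2$, a non-listed member can be swapped for $\ux_{n-2}$ or $\ux_{n-1}$ without increasing the maximum or destroying independence), so the whole of $\g_{\xi}$ is covered by the listed trajectories. With that strengthening your second paragraph already finishes the proof, and your third paragraph and its case analysis become unnecessary.
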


\begin{proof}
Indeed, let $\ux=(Q,P)\in\bZ^{2}$ be primitive  with $\pm\ux$ different from 
all these points. Then we have $Q\neq 0$ otherwise $\ux=(0,\pm 1)=\pm \ux_{-1}$ against the hypothesis. Since $L_{\ux}=L_{-\ux}$, 
we may assume without loss of generality that $Q\geqslant 1$. Thus, there exists an integer $n$ with $1\leqslant n \leqslant s$
such that
 $Q_{n-1} \leqslant Q < Q_{n}$, while Proposition \ref{pr1} yields $|Q\xi -P|\geqslant \Delta_{n-2}$. We deduce that 
 \[
L_{\ux}(q)\geqslant \max \left\lbrace  \log Q_{n-1} - q ,\,  \log \Delta_{n-2}+ q \right\rbrace 
=\max \lbrace L_{\ux_{n-2}}(q),L_{\ux_{n-1}}(q)\rbrace ,
\]
for all $q\geqslant 0$. Thus, since $\ux_{n-2}$ and $\ux_{n-1}$ are linearly independent, 
the trajectory of $\ux$ does not contribute to the cover.
\end{proof}

The inequalities of (\ref{eg1}) and (\ref{eg2}) of Proposition \ref{pr1} yield
\begin{align}
\label{ineg}
L_{\ux_{n,t}}(q)&\geqslant L_{\ux_{n-1}}(q) \qquad (q\geqslant 0,\,1\leqslant n < s,\,0< t< a_{n}). 
\end{align}
Since $L_{\ux_{-1}}(q)=q\geqslant L_{\ux_{0}}(q)$ for all $q\geqslant 0$, it follows 
that the graph of $L_{\xi,1}$ is covered by the trajectories of the points $\ux_{n}$ 
with $0\leqslant n< s$, and thus that
\[L_{\xi,1}(q)=  \min \left\lbrace  L_{\ux_{n}}(q) \, ; \, 0 \leqslant n <s \right\rbrace \qquad (q\geqslant 0).\]
In particular, we have $L_{\xi,1}(q)= L_{\ux_{0}}(q)=-q$ near $q=0$. Thus, $q_{0}=0$ is the
 first local maximum of $L_{\xi,1}$ on $[0,\infty)$.
Let $n$ be a positive integer with $ n<s$. The inequalities of (\ref{eg1}) and (\ref{eg2}) 
also imply that the trajectories of $\ux_{n-1}$ and of $\ux_{n}$ meet in a single point. 
To the left of this point, the trajectory of $\ux_{n-1}$ has slope $+1$, while to the right,
 the trajectory of $\ux_{n}$ has slope $-1$. The abscissa of this point is thus the $(n+1)$-th 
local maximum $q_{n}$ of $L_{\xi,1}$ on $[0,\infty)$. It is given by
\[
 q_{n}= \frac{1}{2}\left( \log Q_{n} - \log \Delta_{n-1}\right) .
\]
Moreover, $L_{\xi,1}$ coincides with $L_{\ux_{n-1}}$ on the interval $[q_{n-1},q_{n}]$ (see Figure 2). 
Since none of the points $\ux_{n,0}~=\ux_{n-2}$, $\ux_{n,1},\ldots, \ux_{n,a_{n}}=\ux_{n}$ is a 
multiple of $\ux_{n-1}$, we further deduce from (\ref{ineg}) that, on the same interval, we have
\begin{equation*}
\label{min}
L_{\xi,2}(q)=  \min \left\lbrace  L_{\ux_{n,t}}(q) \, ; \, 0 \leqslant t \leqslant a_{n} \right\rbrace 
\qquad (q_{n-1}\leqslant q\leqslant q_{n}).
\end{equation*}
If $n\neq 1$ and $n\neq s-1$, each inequality of (\ref{eg1}) and of (\ref{eg2}) is strict. Thus, 
the trajectories of the points $\ux_{n,t}$ with $0 \leqslant t \leqslant a_{n}$ are a part of the 
graph of $L_{\xi,2}$ over $[q_{n-1},q_{n}]$, as shown in Figure 2.b. The points at which the trajectories 
of $\ux_{n,t-1}$ and of $\ux_{n,t}$ cross for $t=1,\ldots,a_{n}$ give the $a_{n}$ local maxima of $L_{\xi,2}$
 on $[q_{n-1},q_{n}]$, each contained in the interior of this interval. If $n=1$ and $s\geqslant 2$,
 the situation is illustrated by Figure 2.a. In this case, $q_{0}=0$ is the first local maximum of
 $L_{\xi,2}$ on $[q_{0},q_{1}]$. If $s<\infty$ and if $n=s-1\geqslant 1$, then $q_{s-1}$ is the last
 local maximum of $L_{\xi,2}$ on $[q_{s-2},q_{s-1}]$, as illustrated in Figure 2.c. Finally, if $s=1$,
 we have that $\xi =0$ and the graph $\g_{0}$ is given by Figure 1.a. In general, if $s<\infty$, 
we have that $L_{\xi,2}(q)= L_{\ux_{s-2}}(q)= q+\log \Delta_{s-2}$ for all $q\geqslant q_{s-1}$, 
from which it follows that $L_{\xi,2}$ admits no local maximum on $[q_{s-1},\infty)$.

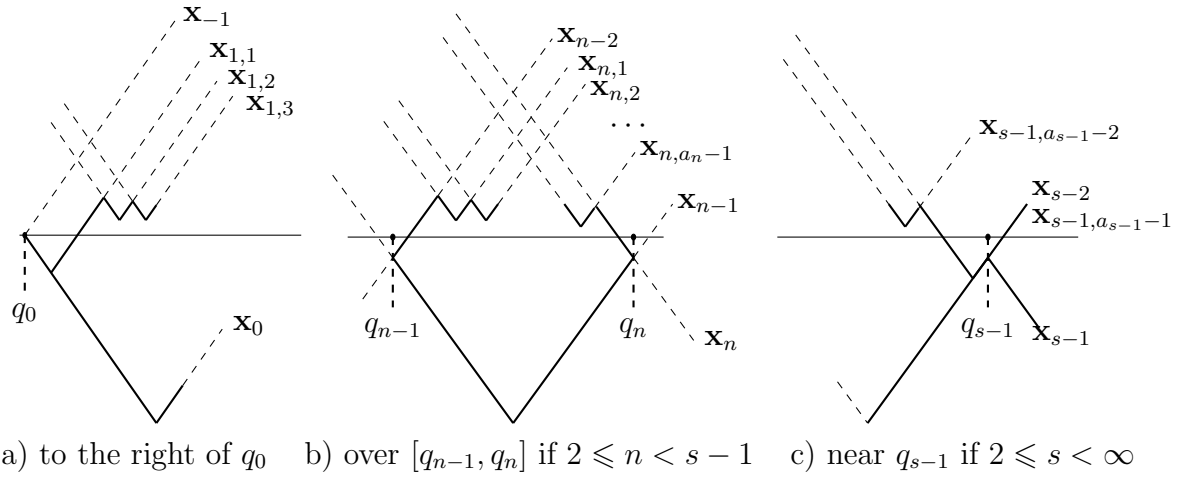
\begin{figure}[ht!]
 \label{fg2}
 
 \begin{tabular}{lll}

\begin{tikzpicture}[xscale=0.35,yscale=0.5]
\draw (-0.7,0)--(0,0)  -- (10,0);

\draw[thick,domain=-0.5:4.5] plot (\x, {-0.5-\x});
\draw[dashed, thin,domain=-0.5:5.3] plot (\x, {0.5+\x});

\draw[  thick, domain=4.5:5.5] plot (\x, {-9.5+\x});
\draw[dashed,  thin, domain=5.5:7] plot (\x, {-9.5+\x});
\draw[fill] (-0.5,0) circle [radius=2pt];

\draw [ thick, dashed] (-0.5,0)--(-0.5,-1.5)node[below]{$q_{0}$ };

\draw[  thick, domain=0.5:2.5] plot (\x, {-1.5+\x});
\draw[dashed,  thin, domain=2.5:6.3] plot (\x, {-1.5+\x});

\draw[ dashed, thin,domain=0.5:2.5] plot (\x, {3.5-\x});
\draw[thick,domain=2.5:3.1] plot (\x, {3.5-\x});
\draw[  thick, domain=3.1:3.6] plot (\x, {-2.7+\x});
\draw[dashed,  thin, domain=3.6:6.8] plot (\x, {-2.7+\x});

\draw[ dashed, thin,domain=1:3.6] plot (\x, {4.5-\x});
\draw[thick,domain=3.6:4.1] plot (\x, {4.5-\x});
\draw[thick, domain=4.1:4.5] plot (\x, {-3.7+\x});
\draw[dashed,  thin, domain=4.5:7.4] plot (\x, {-3.7+\x});

\draw[fill] (5.1,5.8)      node[right]{ $\ux_{-1} $};
\draw[fill]  (7,-2.4)      node[right]{ $\ux_{0} $};
\draw[fill]  (6.1,4.8)    node[right]{ $\ux_{1,1} $};
\draw[fill]  (7.5,3.4)     node[right]{$\ux_{1,3} $};
\draw[fill]  (6.8,4.1)     node[right]{$\ux_{1,2} $};
\end{tikzpicture}
&

 \begin{tikzpicture}[xscale=0.4,yscale=0.55]
\draw (-0.5,0)--(0,0)  -- (10,0);

\draw[dashed,  thin, domain=0:1] plot (\x, {-1.5+\x});
\draw[  thick, domain=1:2.5] plot (\x, {-1.5+\x});
\draw[dashed,  thin, domain=2.5:6.3] plot (\x, {-1.5+\x});

\draw[ dashed, thin,domain=-1:1] plot (\x, {0.5-\x});
\draw[thick,domain=1:5] plot (\x, {0.5-\x});

\draw[  thick, domain=5:9] plot (\x, {-9.5+\x});
\draw[dashed,  thin, domain=9:10.3] plot (\x, {-9.5+\x});
\draw[fill] (1,0) circle [radius=2pt];

\draw [ thick, dashed] (1,0)--(1,-1.7)node[below]{$ q_{n-1}$ };

\draw[dashed, thin,domain=3.1:7.77] plot (\x, {8.5-\x});
\draw[  thick,domain=7.77:9] plot (\x, {8.5-\x});
\draw[dashed, thin,domain=9:11] plot (\x, {8.5-\x});

\draw[fill] (9,0) circle [radius=2pt];
\draw [ thick, dashed] (9,0)--(9,-1.7)node[below]{$ q_{n}$ };

\draw[ dashed, thin,domain=0.7:2.5] plot (\x, {3.5-\x});
\draw[thick,domain=2.5:3.1] plot (\x, {3.5-\x});
\draw[  thick, domain=3.1:3.6] plot (\x, {-2.7+\x});
\draw[dashed,  thin, domain=3.6:6.8] plot (\x, {-2.7+\x});

\draw[ dashed, thin,domain=1.2:3.6] plot (\x, {4.5-\x});
\draw[thick,domain=3.6:4.1] plot (\x, {4.5-\x});
\draw[thick, domain=4.1:4.5] plot (\x, {-3.7+\x});
\draw[dashed,  thin, domain=4.5:7.4] plot (\x, {-3.7+\x});

\draw[ dashed, thin,domain=2.67:6.7] plot (\x, {7.5-\x});
\draw[thick,domain=6.7:7.25] plot (\x, {7.5-\x});

\draw[  thick, domain=7.25:7.7] plot (\x, {-7+\x});
\draw[dashed,  thin, domain=7.7:9.05] plot (\x, {-7+\x});

\draw[fill] (6.1,4.8)   node[right]{ $\ux_{n-2} $};
\draw[fill] (11,-2.5)   node[right]{ $\ux_{n} $};
\draw[fill] (10.1,0.8)    node[right]{ $\ux_{n-1} $};
\draw[fill] (8.85,2.05)   node[right]{ $\ux_{n,a_{n}-1} $};
\node at (8.9,2.7)     { $\cdots $};
\draw[fill] (7.2,3.5)     node[right]{$\ux_{n,2} $};
\draw[fill] (6.8,4.1)   node[right]{$\ux_{n,1} $};

\end{tikzpicture}
&

\begin{tikzpicture}[xscale=0.4,yscale=0.55]
\draw (2,0) -- (12,0);

\draw[ dashed, thin,domain=4:5] plot (\x, {0.5-\x});
\draw[  thick, domain=5:9] plot (\x, {-9.5+\x});
\draw[ thick, domain=9:10.3] plot (\x, {-9.5+\x});

\draw[thick,domain=9:10.7] plot (\x, {8.5-\x});
\draw[fill] (9,0) circle [radius=2pt];
\draw [ thick, dashed] (9,0)--(9,-1.7)node[below]{$ q_{s-1}$ };

\draw[ dashed, thin,domain=2.61:6.7] plot (\x, {7.5-\x});
\draw[thick,domain=6.7:8.5] plot (\x, {7.5-\x});
\draw[  thick, domain=8.5:9] plot (\x, {-9.5+\x});
\draw[dashed,  thin, domain=9:10.3] plot (\x, {-9.5+\x});

\draw[ dashed, thin,domain=2.2:5.7] plot (\x, {6.5-\x});
\draw[thick,domain=5.7:6.25] plot (\x, {6.5-\x});
\draw[  thick, domain=6.25:6.7] plot (\x, {-6+\x});
\draw[dashed,  thin, domain=6.7:8.55] plot (\x, {-6+\x});

\draw[fill] (12.8,-2.4) node[left]{ $\ux_{s-1} $};
\draw[fill] (10.1,0.4)    node[right]{$\ux_{s-1,a_{s-1}-1} $};
\draw[fill] (8.35,2.55)    node[right]{ $\ux_{s-1,a_{s-1}-2} $};
\draw[fill] (10.1,1.1)  node[right]{ $\ux_{s-2} $};

\end{tikzpicture}

\end{tabular}

\begin{tabular}{lll}
 a) to the right of $q_{0}$ & \ b) over $[q_{n-1},q_{n}]$ if $2\leqslant n< s-1$ &  \ c) near $ q_{s-1}$ if $2\leqslant s< \infty$
  \end{tabular}

       \caption{Combined graph $\g_{\xi}$. }

 \end{figure}

\end{document}